\newtheorem{te}{Theorem}[section]
\newtheorem*{ack*}{Acknowledgment}
\def\N{{\mathbb N}}\def\les{{\;\lessapprox}\;}
\def\Qc{{\mathcal Q}}
\def\Z{{\mathbb Z}}
\def\P{{\mathbb P}}
\def\nint{\mathop{\diagup\kern-13.0pt\int}}
\newenvironment{proof}{\noindent {\bf Proof} }{\endprf\par}
\def \endprf{\hfill  {\vrule height6pt width6pt depth0pt}\medskip}
\def\emph#1{{\it #1}}
\begin{document}
\author{Jean Bourgain}
\address{Department of Mathematics, Princeton University}
\email{bourgain2010@@gmail.com}
\author{Ciprian Demeter}
\address{Department of Mathematics, Indiana University,  Bloomington IN}
\email{demeterc@@indiana.edu}
\thanks{The authors are  partially supported by the Collaborative Research NSF grant DMS-1800305}

\title[On $k$th powers in arithmetic progression]{On the number of $k$th powers inside arithmetic progressions}

\begin{abstract}
We find upper bounds that are sharp  for the number of $k$th powers inside arbitrary arithmetic progressions whose step has $O(1)$ many divisors.
\end{abstract}
\maketitle

For fixed $k\ge 2$, how many kth powers of integers $t^k$ can lie inside an arithmetic progression of length $N$? By considering the progression $\{1,2,\ldots, N\}$ we see that this number can be as large as $\sim N^{\frac1k}$. In this note we search for upper bounds, and show that they  match the above lower bound  in the special case when the step of the progression has $O(1)$ many divisors.
\medskip

More precisely, let $\Qc_k(N;q,a)$ denote the number of $k$th powers  in the arithmetic progression $a+q,a+2q,\ldots,a+Nq$. Write
$$\Qc_k(N)=\sup_{a,q\in\N\atop{q\not=0}}\Qc_k(N;q,a).$$
Rudin \cite{Ru} has conjectured that $\Qc_2(N)\sim N^{\frac12}$. We may similarly conjecture that  for each $k\ge 2$
\begin{equation}
\label{ucuycfiedoie}
\Qc_k(N)\les N^{\frac1k},
\end{equation}
where $\les$ denotes logarithmic losses of the form $(\log N)^{O(1)}$. The logarithmic loss here is added for extra safety, it is not clear whether it is really needed.

 The best known upper bound for $\Qc_2(N)$ is due to Bombieri and Zannier \cite{BZ}
$$\Qc_2(N)\les N^{\frac35}.$$
This builds on earlier work \cite {BGP} of Bombieri, Granville and Pintz that proved the result with exponent $\frac23$ in place of $\frac35$. These rely on deep results in number theory regarding rational points on  curves.

The papers \cite{Gra} and \cite{CG} contain a nice discussion on the problem.
In particular, they mention a refined version of  Rudin's conjecture, according to which the progression $\{24n+1:\;0\le n\le N-1\}$ contains the largest number of squares.
\smallskip

We verify the conjectured inequality \eqref{ucuycfiedoie} in the case when the step $q$ of the progression has $O(1)$ divisors. In fact we observe that the argument extends to arbitrary polynomials. It would be of  interest to lower the exponent of $d(q)$ as much as possible.

\begin{te}
\label{Bonumbersquares}
Let $d(q)$ be the number of divisors of $q$. Then for each polynomial $P_k$ of degree $k\ge 1$ with integer coefficients and each $a,q,N\in\Z$ we have   
$$|\{t\in \Z:\;P_k(t)\in\{a+q,a+2q,\ldots,a+Nq\}\}|\lesssim d(q)^{k-1}N^{\frac1k}.$$
The implicit constant depends only on $k$.
\end{te}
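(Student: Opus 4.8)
The plan is to argue by induction on the degree $k$, the base case $k=1$ being immediate since a linear polynomial is injective and the progression has only $N$ terms. Two elementary facts about $P_k\in\Z[t]$ will drive everything. First, $t'-t$ divides $P_k(t')-P_k(t)$, so any two solutions $t,t'$ with $P_k(t)=a+jq$ and $P_k(t')=a+j'q$ satisfy $(t'-t)\mid (j'-j)q$. Second, the sublevel set estimate: for a polynomial of degree $k$ with leading coefficient $c_k$ one has that $\{s\in\R:\ |P_k(s)-c|\le\lambda\}$ has measure $\lesssim_k (\lambda/|c_k|)^{1/k}$; since $c_k$ is a nonzero integer, $|c_k|\ge 1$, and all solutions lie in a union of $O(k)$ intervals of total length $L\lesssim_k (Nq)^{1/k}$. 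In particular it suffices to bound the count on one interval on which $P_k$ is monotone, so that the indices $j_i$ are distinct.

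Fix such a monotone interval $I$ and let $t_1$ be its smallest solution. The key move is to pass to $R(t)=\frac{P_k(t)-P_k(t_1)}{t-t_1}\in\Z[t]$, which has degree $k-1$ and leading coefficient $c_k$, and which satisfies $R(t_i)(t_i-t_1)=(j_i-j_1)q$ at every solution $t_i$. I classify the solutions by $g:=\gcd(t_i-t_1,q)$, a divisor of $q$. Writing $t_i-t_1=gu_i$ one checks $\gcd(u_i,q/g)=1$, so $R(t_i)u_i=(j_i-j_1)(q/g)$ forces $u_i\mid (j_i-j_1)$, whence $R(t_i)=\frac{q}{g}\,m_i$ with $m_i=(j_i-j_1)/u_i$ a positive integer, $1\le m_i<N$. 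Thus for each fixed $g$ the values $R(t_i)$ lie in the arithmetic progression with step $q/g$ and first term $q/g$.

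Now I bound the number of solutions with a given $g$ in two ways and take the smaller. They lie in the class $t\equiv t_1\ (\mathrm{mod}\ g)$ inside $I$, so there are at most $L/g+1\lesssim (Nq)^{1/k}/g+1$ of them. Alternatively, since $R(t_i)=\frac{q}{g}m_i$ and (heuristically $R(t)\approx |c_k|t^{k-1}$ in the bulk, so that $m_i=\frac{g}{q}R(t_i)\lesssim g\,N^{(k-1)/k}q^{-1/k}$) these values run through a progression of effective length $N'\lesssim_k g\,N^{(k-1)/k}q^{-1/k}$, the induction hypothesis applied to $R$ with step $q/g$ bounds their number by $\lesssim d(q/g)^{k-2}(N')^{1/(k-1)}\lesssim d(q)^{k-2}g^{1/(k-1)}N^{1/k}q^{-1/(k(k-1))}$. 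Splitting the divisors of $q$ at $g=q^{1/k}$ — using the inductive bound for $g\le q^{1/k}$ and the interval bound for $g>q^{1/k}$ — and summing over the at most $d(q)$ divisors, both contributions are $\lesssim d(q)^{k-1}N^{1/k}$, closing the induction. The decisive gain is that the reduced length $N'\approx g\,N^{(k-1)/k}q^{-1/k}$ turns the inductive exponent $\frac{1}{k-1}$ into $\frac{(k-1)/k}{k-1}=\frac1k$, while the interval bound is exactly what tames the large divisors $g$ for which the reduced length is no shorter than $N$.

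The step I expect to be the main obstacle is the uniform control of the effective length $N'$, i.e.\ bounding $R(t_i)=\frac{q}{g}m_i$ above by $\lesssim_k g\,N^{(k-1)/k}q^{-1/k}$ with a constant depending only on $k$ and \emph{not} on the lower-order coefficients of $P_k$. Near the bottom of the monotone interval $R(t)$ can be as large as the full value jump, so the clean way to secure the estimate is to first localize $t$ to dyadic blocks $t\asymp T$, on which $R(t)\asymp |c_k|T^{k-1}$ and the local length is genuinely reduced, and then sum the resulting geometric series over the scales $T\lesssim (Nq)^{1/k}$; verifying that $|c_k|$ enters the sublevel-set length and the size of $R$ in compensating ways, so that it cancels out of the final bound, is the technical heart. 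The remaining work — the monotone decomposition into $O(k)$ pieces and the divisor bookkeeping that produces exactly one extra factor $d(q)$ per reduction in degree — is routine.
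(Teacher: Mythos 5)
Your reduction to the quotient polynomial $R(t)=(P_k(t)-P_k(t_1))/(t-t_1)$ and the gcd classification are fine, but the step you yourself flag as the ``technical heart'' is not a technicality: the effective-length claim $m_i\lesssim_k g\,N^{(k-1)/k}q^{-1/k}$ (equivalently $|R(t_i)|\lesssim_k (Nq)^{(k-1)/k}$) is \emph{false}, and the proposed dyadic localization cannot repair it. The sublevel-set estimate controls the \emph{measure} of $\{t:\,P_k(t)\in[a+q,a+Nq]\}$, not the size of its elements: when $|a|\gg Nq$ the solutions sit far from the origin, where $P_k$ is steep and $R$ is huge. Concretely, take $k=2$, $P_2(t)=t^2$, $q=r^2$ with $r$ prime, $r>N$, and $a=(B^2-1)r^2$ with $N^{1/2}\ll B\le N/4$. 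Then $t_s=(B+s)r$ for $0\le s\le N/(4B)$ are all solutions, since $t_s^2=a+(2Bs+s^2+1)q$ and $2Bs+s^2+1\le N$. Taking $t_1=Br$, every $s\ge 1$ gives $g=\gcd(t_s-t_1,q)=r=q^{1/2}$ (so this falls in your inductive branch $g\le q^{1/k}$), $u_s=s$, and $m_s=2B+s\gg N^{1/2}=g\,N^{1/2}q^{-1/2}$; your claimed bound fails by the factor $B/N^{1/2}$, unbounded in terms of $k$, and your inductive estimate then yields only $\lesssim B$, which can be as large as $N/4$. The dyadic fix assumes the solutions occupy scales $T\lesssim (Nq)^{1/k}$, but here they occupy the single scale $T\asymp Br\gg (Nq)^{1/2}$, dictated by $a$; any bound extracted that way would depend on $a$, which the theorem forbids. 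Note also that your normalization that the $R$-values lie in a progression with \emph{first term} $q/g$ is what forces the effective length to be $\max_i m_i$: in the example the values $R(t_s)=(2B+s)r$ form a progression of step $r$ with small \emph{spread} ($\sim N r/(4B)$) but huge starting point. Since the statement being inducted on allows an arbitrary offset $a$, only the spread needs controlling; one can genuinely repair your scheme this way (a Markov-type inequality gives $\max_i R(t_i)-\min_i R(t_i)\lesssim_k Nq/|J|$, $J$ the interval spanned by the solutions, to be balanced against the congruence bound $|J|/g+1$), but that is a different estimate from the one you wrote, and it is missing.

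It is instructive to compare with the paper's proof, which sidesteps this issue entirely and needs none of your analytic input (no sublevel sets, no monotone decomposition, no size control on $R$). There one fixes a single solution $t_0$, writes $P(t)-P(t_0)=(t-t_0)R(t)$, so at any other solution $(t-t_0)R(t)=mq$ with $|m|\le N$, and splits $q=q_1q_2$ with $t-t_0=n_1q_1$, $R(t)=n_2q_2$, hence $|n_1n_2|\le N$. The whole proof is the multiplicative dichotomy: either $|n_1|\le N^{1/(k+1)}$, in which case $t=t_0+n_1q_1$ is counted \emph{directly} from the congruence, or $|n_2|\le N^{k/(k+1)}$, in which case the induction hypothesis applies to $R$ with step $q_2$ and the shortened length $N^{k/(k+1)}$ comes for free, with no information about where the values $R(t)$ lie. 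This is exactly the mechanism that rescues your failing case: when $m_i$ is large, $u_i$ is automatically small, and those solutions are counted through the congruence rather than through the induction. In your counterexample all solutions have $u_s\le N/(4B)\le N^{1/2}$, so the paper's first alternative absorbs them immediately. Your case split by the size of the divisor $g$ cannot substitute for this dichotomy, because for a fixed small $g$ both large and small $u_i$ occur.
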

\begin{proof}
We use induction on $k$. The case $k=1$ is trivial. Assume the statement holds for $k$, and let $P_{k+1}$ be a polynomial of degree $k+1$. Fix $t_0\not=t$ such that
$$P_{k+1}(t),P_{k+1}(t_0)\in\{a+q,a+2q,\ldots,a+Nq\}.$$
Write $$P_{k+1}(t)-P_{k+1}(t_0)=(t-t_0)P_k(t),$$
for some polynomial $P_k$ of degree $k$.
We must have
\begin{equation}
\label{wqwdiouc7rcyf9ex}
\begin{cases}(t-t_0)=n_1q_1\\P_k(t)=q_2n_2\end{cases}
\end{equation}
with  $q_1q_2=q$. Moreover $n_1n_2\le N$. We must have that either $$n_1\le N^{\frac1{k+1}}$$
or 
$$n_2\le N^{\frac{k}{k+1}}.$$
Let us fix the pair $(q_1,q_2)$. In the first case, there are $\le N^{\frac1{k+1}}$ possible values of $t$ (considering only the first equation in \eqref{wqwdiouc7rcyf9ex}), while in the second case there are $O(d(q_2)^{k-1}N^{\frac1{k+1}})$ values of $t$, due to the induction hypothesis (considering only the second equation in \eqref{wqwdiouc7rcyf9ex}). Since there are $d(q)$ many ways to choose the pair $(q_1,q_2)$ we conclude that the total contribution is 
$$\lesssim d(q)(N^{\frac1{k+1}}+d(q)^{k-1}N^{\frac1{k+1}})\lesssim d(q)^kN^{\frac1{k+1}}.$$
  
\end{proof}

\begin{ack*}
 This argument was discovered after Igor Shparlinski and Houcein el Abdalaoui have informed us of an alternative, equally simple argument. We are grateful to them for alerting us of their argument and for helpful discussions. 
\end{ack*}

\end{document}